\theoremstyle{plain}
\newtheorem{theorem}{Theorem}
\newtheorem{lemma}[theorem]{Lemma}
\newtheorem{corollary}[theorem]{Corollary}
\theoremstyle{definition}
\newtheorem{remark}[theorem]{Remark}
\newtheorem{prop/def}[theorem]{Proposition/Definition}
\newcommand*\Z{\mathbb{Z}}
\newcommand*\K{\mathbb{K}}
\newcommand*\CC{\mathbb{C}}
\newcommand*\GG{\mathbb{G}}
\newcommand*\mO{\mathcal{O}}
\newcommand*\ch{\textnormal{ch}}
\newcommand*\mM{\mathcal{M}}
\newcommand*\Quot{\textnormal{Quot}_X}
\newcommand*\QuotX{\textnormal{Quot}_X(E,n)}
\newcommand*\vir{\textnormal{vir}}
\newcommand*\rk{\textnormal{rk}}
\newcommand\numberthis{\addtocounter{equation}{1}\tag{\theequation}}
\newcommand\Item[1][]{%
  \ifx\relax#1\relax  \item \else \item[#1] \fi
  \abovedisplayskip=0pt\abovedisplayshortskip=0pt~\vspace*{-\baselineskip}}
\DeclareRobustCommand\optionalsec[1]{%
  \ifnum\pdfstrcmp{#1}{\thesection}=0\else#1.\fi
}
\numberwithin{equation}{section}
\begin{document}

\title{Application of Lagrange inversion to wall-crossing for Quot schemes on surfaces}
\titlemark{Application of Lagrange inversion to wall-crossing for Quot schemes on surfaces}



\emsauthor{1}{
	\givenname{Arkadij}
	\surname{Bojko}
	\mrid{1448168}
	\orcid{0000-0003-1530-2418}}{A. Bojko}

\Emsaffil{1}{
	\department{Institute of Mathematics}
	\organisation{Academia Sinica}
	\rorid{0120q3031}
	\address{6F, Astronomy-Mathematics Building,
No. 1, Sec. 4, Roosevelt Road}
	\zip{10617}
	\city{Taipei City}
	\country{Taiwan}
	\affemail{abojko@gate.sinica.edu.tw}}

\classification[05A15]{13F25}

\keywords{Formal Power Series, Lagrange Inversion, Newton--Puiseux Solutions, Quot Schemes}

\begin{abstract}
Motivated by my work on enumerative invariants for Quot schemes, I related two power series obtained by two different means. One of them was computed using geometric arguments via virtual localization methods and the other one came from working with representation theoretic objects called vertex algebras.

In this note, I give proof of the equality of the two power series by relying only on techniques related to Lagrange inversion. This makes my work on Quot schemes independent of the previous results in the literature and proves a new combinatorial identity.
\end{abstract}

\maketitle
\section{Introduction}
Grothendieck in his lecture \cite{FGA} introduced Quot schemes as a solution to a moduli problem which captures quotients of a fixed sheaf on a projective variety $X$. More explicitly: fixing an algebraic K-theory class $\beta$ and a sheaf $E$ on $X$, the scheme $\Quot(E,\beta)$ parametrizes the quotients
\[E\twoheadrightarrow F,\]
\sloppy where $ \ch(F) = \beta$. When $X$ is a surface, $E$ is a vector bundle, and $\beta$ has 1-dimensional support, $\Quot(E,\beta)$ admits a perfect obstruction theory as in \cite{BF} which was observed by Marian--Oprea--Pandharipande \cite{MOP1}. As such, one can integrate cohomology classes on $\Quot(E,\beta)$ with respect to its virtual fundamental class $\Big[\Quot(E,\beta)\Big]^{\textnormal{vir}}$. The resulting invariants can be combined into generating series by summing over the Euler characteristics $\chi(\beta)$ and were studied in \cite{AJLOP, OP1, JOP} and \cite{bojko3}. In the last reference, I focused on the case when $\beta=np$ for $p$ the class of a sky-scraper sheaf on $X$ and $n\geq 0$ is an integer.

The series of \textit{tautological invariants} studied in \cite{bojko3} take the form
\[	Z_{E}(f,g,\alpha;q) = \sum_{n\geq 0}q^n\int_{[\QuotX]^{\vir}} f(\alpha^{[n]})g(T^{\vir}),\]
which depends on the additional choice of a K-theory class $\alpha\in K^0(X)$ and multiplicative genera determined by their invertible power series $f,g\in \K\llbracket t\rrbracket$. To stay brief, I refer to \cite[§1.3]{bojko3} for a more thorough definition of $\alpha^{[n]}$, $f,g$, and $Z_E(f,g,\alpha;q)$ as an integral over the virtual fundamental class. One of the main results in \cite{bojko3} is the explicit expression for this generating series of invariants (see \cite[(4.23)]{bojko3}):
\begin{align*}
\label{Eq:ZESexp}
Z_{E}(f,g,\alpha;q)=
&\prod_{k_1\neq k_2}g(H_{k_1}-H_{k_2})^{c_1(X)^2}\prod_{k=1}^e\big(g(H_k)g(-H_k)\big)^{-ec_1(X)^2}\\&\prod_{k=1}^e\bigg(\frac{f(H_k)}{f(0)}\bigg)^{c_1(\alpha)\cdot c_1(X)}\prod_{k=1}^e\bigg(\frac{f(H_k)}{f(0)}\bigg)^{\frac{a}{e}c_1(E)c_1(X)}G_e(g^ef^a)^{c_1(X)^2},
\numberthis
\end{align*}
where $c_1(-)$ denotes the first Chern class, $e=\rk(E)$, $a=\rk(\alpha)$, and $G_e(R)$ is the power series \eqref{eqGeR} depending on $R(t)\in K\llbracket t\rrbracket$, and $H_k$ are the $e$ different Newton--Puiseux solutions to 
\[H^e = qg^e(H)f^a(H).\]
 When $E$ is a trivial vector bundle, a similar result was computed in \cite[Theorem 17]{AJLOP}. Comparing the terms under the exponent $(-)^{c_1(X)^2}$ led to a prediction for a more elegant form of \eqref{eqGeR} noted down in \eqref{eqExp} which is based on \cite[(17)]{AJLOP}. When $f,g$ are known, the latter form of $G_e(g^ef^a)$ is far more manageable in concrete computations. This made it possible for me to study the structural behaviour of \eqref{Eq:ZESexp} leading for example to the proof of a new case of the Segre--Verlinde correspondence  \footnote{See \cite{Johnson}, \cite{MOP2}, \cite{MOPhigher} for the origin of the Segre--Verlinde correspondence.} in \cite[Theorem 1.6]{bojko3}, a new type of symmetry based on \cite[§1.9]{AJLOP}, and the proof of rationality in \cite[Theorem 1.4]{bojko3} of the series \eqref{Eq:ZESexp} for special choices of $f,g$ including the $\chi_{y}$-genus paired with an exterior power of $\alpha^{[n]}$.

While the previous authors used methods relying on a virtual localization formula of \cite{PG} combined with a torus-action on $E=\CC^{e}$, I have instead utilized the wall-crossing framework of Joyce \cite{JoyceWC} and the methods of its application developed in \cite{bojko2, bojko3}. A further reason for proving Theorem \ref{thmCtC} was to make my work self-contained and to make it reliant solely on wall-crossing. This highlights how powerful our approach is, as I circumvent the need to apply localization along $E$ thus removing the restriction on what $E$ needs to be. Simultaneously, I am still able to give a closed formula for all integrals $Z_E(f,g,\alpha;q)$.

\begin{theorem}
\label{thmCtC}
Let $R(t)\in \K\llbracket t\rrbracket$ be a power series over a field $\K$ not involving $q$ and $R(0)\neq 0$.
Suppose that $H_k(q)$ are the $e>0$ different Newton--Puiseux solutions to $
H_k^e(q) = qR\big(H_k(q)\big).
$
Set
\begin{equation}
\label{eqGeR}
 G_e(R)= \exp\bigg[-\sum_{\begin{subarray}a n,m>0\\ j>0\end{subarray}}j\frac{1}{m}[z^{me+j}]\Big\{R^m(z)\Big\}\frac{1}{n}[z^{ne-j}]\Big\{R^n(z)\Big\}q^{n+m}\bigg]   
\end{equation}
then the following holds:
\begin{align*}
\label{eqExp}
&G_e(R)=  \prod_{k=1}^e\bigg(\frac{R(H_k)}{R(0)}\bigg)\prod_{k=1}^eH_k^{e}
\cdot \prod_{k_1\neq k_2 }\bigg(\frac{1}{H_{k_2}-H_{k_1}}\bigg)\prod_{k=1}^e\bigg(\frac{e}{H_k}-\frac{R'(H_k)}{R(H_k)}\bigg).
\numberthis
\end{align*}
\end{theorem}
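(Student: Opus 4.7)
The plan is to use Lagrange--B\"urmann inversion to translate both sides into statements about the symmetric power sums $A_k(q) := \sum_{i=1}^e H_i(q)^k$, and then match them using Weierstrass preparation applied to $Q(T,q) := T^e - qR(T)$.

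For the left-hand side, set $u = q^{1/e}$ so that $H = u\,R(H)^{1/e}$ on each branch. The single-branch Lagrange--B\"urmann formula yields $[u^N]H^k = \tfrac{k}{N}[z^{N-k}]R(z)^{N/e}$ for $N \geq 1$ and $k \in \Z$; averaging over the Galois orbit $u \mapsto \zeta_e u$ collapses this to the integer-$q$ formula
\begin{equation*}
[q^n]A_k(q) \;=\; \tfrac{k}{n}\,[z^{ne-k}]\,R(z)^n,\qquad n \geq 1.
\end{equation*}
Specialising to $k = j$ and $k = -j$ for $j \geq 1$ identifies the coefficients inside \eqref{eqGeR} as $\tfrac{1}{n}[z^{ne-j}]R^n = \tfrac{1}{j}[q^n]A_j$ and $\tfrac{1}{m}[z^{me+j}]R^m = -\tfrac{1}{j}[q^m]A_{-j}$, so the double sum is a convolution and
\begin{equation*}
\log G_e(R) \;=\; \sum_{j \geq 1}\tfrac{1}{j}\,A_{-j}^{(+)}(q)\,A_j(q),
\end{equation*}
where $A_{-j}^{(+)}$ denotes the projection of the Laurent series $A_{-j}$ onto strictly positive $q$-powers (note $A_j^{(+)} = A_j$ for $j \geq 1$ because $H_i(0) = 0$).

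For the right-hand side, Weierstrass preparation factors $Q = P\,U$, with $P(T,q) = \prod_i(T - H_i)$ the distinguished monic polynomial of degree $e$ in $T$ and $U(T,q)$ a unit satisfying $U(T,0) = 1$. Combining $Q'(H_i) = P'(H_i)U(H_i)$, $\prod_{i_1 \neq i_2}(H_{i_2}-H_{i_1}) = \prod_i P'(H_i)$, and $H_i^e\bigl(\tfrac{e}{H_i} - \tfrac{R'(H_i)}{R(H_i)}\bigr) = Q'(H_i)$ (the last from $qR(H_i) = H_i^e$), three of the four factors on the right-hand side collapse and the expression reduces to
\begin{equation*}
\prod_{i=1}^e \frac{R(H_i)\,U(H_i)}{R(0)}.
\end{equation*}
Taking logarithms, the first piece $\sum_i\log(R(H_i)/R(0))$ has $[q^n]$-coefficient $\tfrac{e}{n}[z^{ne}]R^n$ by the Lagrange formula with $F(z) = \log(R(z)/R(0))$.

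The remaining content is the identification of $\sum_i \log U(H_i)$ with the correct part of $\sum_{j \geq 1}\tfrac{1}{j}A_{-j}^{(+)}A_j$. For this I would differentiate both sides in $q$, using the clean relation $q H_i' = (e/H_i - R'(H_i)/R(H_i))^{-1}$ obtained by logarithmically differentiating $H_i^e = qR(H_i)$, together with the logarithmic derivative $\partial_q \log U(T,q) = \sum_j H_j'/(T - H_j) - R(T)/Q(T)$. Evaluating at $T = H_i$ via L'H\^opital to cancel the pole on the diagonal $j = i$ produces a closed symmetric expression in the $H_j$'s whose $[q^n]$-coefficients match the bilinear form on the left via the Lagrange formula for $\sum_i F(H_i)$. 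The main obstacle is the regularisation: formally $\sum_{j \geq 1}A_{-j}A_j/j = -\sum_{i_1, i_2}\log(1 - H_{i_2}/H_{i_1})$ diverges on the diagonal $i_1 = i_2$, and the positive-part projection is precisely the combinatorial device that tames this while preserving the off-diagonal contribution matching $\sum_i \log U(H_i)$; carrying the matching out rigorously at each fixed order in $q$, where only finitely many $j$ contribute, is the delicate step that the whole argument turns on.
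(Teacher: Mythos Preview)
Your setup through the Weierstrass preparation step is correct and in fact cleaner than the paper's treatment of the right-hand side: the identification $\prod_{i_1\neq i_2}(H_{i_2}-H_{i_1})=\prod_i P'(H_i)$ and $H_i^e\bigl(\tfrac{e}{H_i}-\tfrac{R'(H_i)}{R(H_i)}\bigr)=Q'(H_i)=P'(H_i)U(H_i)$ collapses the target to $\prod_i R(H_i)U(H_i)/R(0)$, which the paper only reaches implicitly after computing $dq/dH_i$ by hand. Your translation of the exponent into $\sum_{j\ge 1}\tfrac{1}{j}A_{-j}^{(+)}A_j$ via branch-averaged Lagrange inversion is exactly the paper's Corollary~\ref{CorMG}, so up to this point the two arguments coincide.

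The gap is the final step, which you yourself flag as ``the delicate step that the whole argument turns on'' and do not carry out. Two concrete problems with your sketch: first, differentiating $\log U(H_i(q),q)$ in $q$ is not $(\partial_q\log U)(H_i,q)$ --- you must add the chain-rule term $(\partial_T\log U)(H_i,q)\,H_i'$, and both pieces are individually singular at $T=H_i$, so the L'H\^opital cancellation you allude to has to be done across this sum, not termwise. Second, even granting that, you still have to show that the regularised bilinear form $\sum_j\tfrac1jA_{-j}^{(+)}A_j$ matches $\sum_i\log\bigl(R(H_i)U(H_i)/R(0)\bigr)$, and it is not clear from your outline how the positive-part projection on $A_{-j}$ interacts with the substitution $T=H_i$ in the expansion of $\log U$; the formal identity $\sum_j\tfrac1jA_{-j}A_j=-\sum_{i_1,i_2}\log(1-H_{i_2}/H_{i_1})$ that you invoke does not converge even off the diagonal, since $H_{i_2}/H_{i_1}$ has a root of unity as leading term.

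The paper resolves exactly this regularisation by a two-variable trick: it replaces $q^{n+m}$ by $p^nq^m$, so that the Lagrange expansion of $A_{-j}$ lives in $p$ and that of $A_j$ in $q$. Then $\sum_j\tfrac1j\bigl(\sum_iH_i(p)^{-j}\bigr)\bigl(\sum_iH_i(q)^{j}\bigr)$ is an honest convergent sum (for $|q|<|p|$ small), equal to $-\sum_{i_1,i_2}\log\bigl(1-H_{i_2}(q)/H_{i_1}(p)\bigr)$, and the correction terms coming from the non-positive $p$-powers of $A_{-j}(p)$ are computed explicitly and produce the factor $\prod_i R(H_i)/R(0)$ together with an $e\log(1-q/p)$. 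Sending $p\to q$ then turns the diagonal $\log\bigl(H_i(q)-H_i(p)\bigr)$ into $\log H_i'(q)$ and cancels the $\log(p-q)$, yielding the derivative factors directly. This limit $p\to q$ is the clean substitute for the L'H\^opital/differentiation manoeuvre you propose; it is the one missing idea your argument needs.
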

\begin{remark}
    The result stated in \eqref{Eq:ZESexp} was computed using the vertex algebra constructed by Joyce \cite{Joycehall} on the homology of the moduli stack $\mM_X$ of all perfect complexes on $X$. This induced a Lie bracket on a quotient of the homology where the classes $[\Quot(E,\beta)]^{\vir}$ can be naturally transported. By the work of \cite{gross}, there is an explicit description of this vertex algebra and the resulting Lie algebra. This was used in \cite{bojko3} to write down a closed expression for $[\Quot(E,\beta)]^{\vir}$ in terms of a basis of the homology of $\mM_X$ as a consequence of an iterated application of the Lie bracket. Using the more general result, $Z_E(f,g,\alpha;q)$ could be computed directly by simple manipulation of exponential series and second-order derivation. After applying a generalization for Newton--Puiseux series of the Lagrange inversion based on \cite{Gessel}, I obtained the expression \eqref{Eq:ZESexp}.

The techniques used by \cite{AJLOP} are orthogonal to ours. After applying the virtual localization of \cite{PG} for the $\GG_m^e$ action on $E=\CC^e$, they reduce the computation of $Z_E(f,g,\alpha;q)$ to integrals over multiple copies of $\textnormal{Hilb}^n(X)$. A big part is also played by the (virtual) normal contributions coming from the direction along which each $\GG_m$ acts. This feeds into the formula \eqref{eqExp} after an application of the multivariable Lagrange inversion formula from \cite{Gesselmulti}. As mentioned previously, the localization method is limited to situations when there is a $\GG_m^e$ action on $E$, while our approach works for any $E$. In fact, I stated the results in \cite{bojko3} for any torsion-free sheaf $E$ on a surface $X$.
\end{remark}
 The idea of the proof of Theorem \ref{thmCtC} was motivated by the case when $e=1$ which restricts to a unique formal power series $H(q)$ and thus makes the expression considerably simpler. I am grateful to the MathOverflow authors Alex Gavrilov and esg who proved this special case in \cite{MOurl}.
\begin{corollary}[\cite{MOurl}]
\label{cor:onevar}
Let $R(t)\in \K\llbracket t\rrbracket$ be the power series from Theorem \ref{thmCtC} and $H(q)$ the unique power series solution to $
H(q) = qR\big(H(q)\big).
$
Set
\begin{equation}
\label{eqGeRs}
 G(R)= \exp\bigg[-\sum_{\begin{subarray}a n,m>0\\ j>0\end{subarray}}j\frac{1}{m}[z^{m+j}]\Big\{R^m(z)\Big\}\frac{1}{n}[z^{n-j}]\Big\{R^n(z)\Big\}q^{n+m}\bigg]   
\end{equation}
then the following holds:
\begin{align*}
\label{eqExps}
&G(R)=  \bigg(\frac{R(H)}{R(0)}\bigg)
\cdot \bigg(1-\frac{R'(H)}{R(H)}H\bigg).
\numberthis
\end{align*}
\end{corollary}
This simpler version is related to the Lagrange inversion, various versions of which are neatly summarized in \cite[Theorem 2.1.1]{Gessel}. Yet the result of Corollary \ref{cor:onevar} can not be deduced as a direct consequence of the known variations of it. Instead, one needs to use a trick by involving an additional variable $p$ and then taking a limit as $p\to q$. Alternatively, one could compare \eqref{eqGeRs} and \eqref{eqExps} by working with contour integrals and taking residue. 

Moving on to the multivariable Lagrange inversion, one might hope to use one of the results appearing in \cite{Gesselmulti}. Unfortunately, there is no natural way of doing so. Additionally, as the increased complexity of the multivariable expression suggests in \eqref{eqHLG}, there is no immediate way of generalizing Corollary \ref{cor:onevar} to the full statement. Instead, I was forced to adapt the proofs by including Newton--Puiseux series and computing with roots of unity. 
\section{Lagrange inversion for Newton--Puiseux generating series}

The primary tool I will rely on is the following slightly less standard Lagrange inversion formula collecting multiple results of Gessel \cite{Gessel}:
\begin{lemma}[Gessel {\cite[Thm. 2.1.1, eq. (2.2.8), (2.2.9)]{Gessel}} ]
\label{ThGes}
Let $R(t)=\sum_{n=0}^{\infty}r_nt^n$ be a power series not involving $q$ and satisfying $r_0=1$, then for the unique solution $H(q)$ satisfying $H(q) = qR\big(H(q)\big)$ and a formal Laurent series $\phi=\phi(t)$, I have
\begin{align}
\label{eqLag}
\phi\big( H(q)\big) &= [t^0]\big\{\phi(t)\big\}+[t^{-1}]\big\{\phi'(t)\textnormal{log}\big(R(t)\big)\big\} +\sum_{n\neq 0}\frac{1}{n}[t^{n-1}]\big\{\phi'(t)R^n(t)\big\} q^n,\\
\log\big(R(H(q))\big)&= \textnormal{log}(H/q) = \sum_{m>0}\frac{1}{m}[t^{m}]\Big\{R^m(t)\Big\}q^m.
\end{align}
\end{lemma}
Note that I can easily remove the assumption $r_0 = 1$ by working with $R(t)/r_0$ instead. This will be done at the end to recover Theorem \ref{thmCtC} in which case I always implicitly assume that 
\[r_0  =R(0)\neq 0.\]

By looking at Theorem \ref{thmCtC}, it is clear that I need a modification of this result to include multiple Newton--Puiseux solutions. For the definition of Newton--Puiseux solutions for implicit equations see for example \cite{NewPui} where they are defined as power series in $\K\llbracket x^{\frac{1}{e}} \rrbracket$. A similar adaptation appeared already in \cite[Lemma 4.12]{bojko2}. Note that below, $e$ will sometimes be used to denote the Euler number but it is always clear when this is the case, so I do not introduce a new notation.

\begin{corollary}
\label{CorMG}
Working with the power series $R(t)$ from Lemma \ref{ThGes}, let $H_k(q)$ for $k=1,\ldots,e$ be the different Newton--Puiseux series which are solutions to \begin{equation}
\label{eqGSo}H^e_k(q)=qR\big(H_k(q)\big),\end{equation}
then for any Laurent series $\phi(t) = \sum_{n \in \Z}\phi_{n}t^n$, I have
\begin{align*}
\label{eqHLG}
\sum_{k=1}^e\phi\big(H_k(q)\big) &= e\phi_0+ [t^{-1}]\big\{\phi'(t)\textnormal{log}\big(R(t)\big)\big\}\\
&+ \sum_{n\neq 0}\frac{1}{n} [t^{ne-1}]\Big\{\phi'(t)R^n(t)\Big\}q^n,\\
\sum_{k=1}^e\log\Big(R\big(H_k(q)\big)\Big) &= \textnormal{log}\Big(\Big(\prod_{k=1}^e H_k(q)\Big)/q\Big) = \sum_{n>0}\frac{1}{n}[t^{ne}]\Big\{R^n(t)\Big\}q^n.
\numberthis
\end{align*}
\end{corollary}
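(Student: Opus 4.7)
The plan is to reduce the multi-branch identity to the single-solution Lagrange inversion of Theorem \ref{ThGes} by an averaging over the $e$-th roots of unity. Assume $R(0)\neq 0$, let $\zeta$ be a primitive $e$-th root of unity, work in a Laurent-series ring $\mathbb{K}'((q))$ over a field $\mathbb{K}'\supset \mathbb{K}$ containing $\zeta$ and a fixed $e$-th root $R(0)^{1/e}$, and substitute $x=q^e$. Define $\tilde R(t):=R(t)^{1/e}\in\mathbb{K}'[[t]]$, so that the implicit equation $H^e=xR(H)$ becomes $H=q\tilde R(H)$. By Gessel's hypothesis there is a unique $H(q)\in q\mathbb{K}'[[q]]$ solving this equation, and the $e$ Newton--Puiseux solutions of the original equation are exactly $H_i(x)=H(\zeta^{i-1}q)$: the substitution $q\mapsto\zeta^kq$ preserves $x=q^e$ and toggles between the $e$ branches of $R(H)^{1/e}$.

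Applying Theorem \ref{ThGes} with $R$ replaced by $\tilde R$ yields
$$\phi(H(q))=[t^0]\phi(t)+[t^{-1}]\{\phi'(t)\log\tilde R(t)\}+\sum_{N\neq 0}\frac{1}{N}[t^{N-1}]\{\phi'(t)\tilde R(t)^N\}q^N.$$
Since $\log\tilde R(t)=\frac{1}{e}\log R(t)$ and $\tilde R(t)^N=R(t)^{N/e}$ as formal series, the middle term carries an extra factor of $1/e$ and the last term becomes $R(t)^{N/e}$. Summing this identity over $k=0,1,\ldots,e-1$ with $q\mapsto\zeta^kq$ and using the orthogonality $\sum_{k=0}^{e-1}\zeta^{kN}=e\cdot\delta_{e\mid N}$ kills every term whose exponent $N$ is not a multiple of $e$; for the surviving $N=ne$ the non-integer power $R(t)^{N/e}$ collapses to the honest $R(t)^n$, and the factor of $e$ from the orthogonality cancels the $1/e$ in the $\log R$ term. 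Re-indexing and writing $q^{ne}=x^n$ produces exactly the first identity of the corollary. The manifest presence of only integer powers of $x$ in the output confirms that the sum $\sum_i\phi(H_i(x))$ genuinely lives in $\mathbb{K}((x))$ despite the intermediate use of $q=x^{1/e}$ and $\zeta$.

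The second (logarithmic) identity follows by the identical averaging argument applied to the second equation of Theorem \ref{ThGes} for $\tilde R$: one obtains $\log(H(q)/q)=\sum_{m>0}\frac{1}{m}[t^m]\{R(t)^{m/e}\}q^m$, and summing over $q\mapsto\zeta^kq$ again selects $m=ne$ and produces $\sum_{n>0}\frac{1}{n}[t^{ne}]\{R^n\}x^n$, while the left side assembles into $\log\big(\prod_i H_i(x)/x\big)$ up to the explicit constant $\log\prod_{k=0}^{e-1}\zeta^k=\log(-1)^{e-1}$. The main technical obstacle is the rigorous handling of $\tilde R=R^{1/e}$ and of the non-integer intermediate powers $R(t)^{N/e}$: they require a consistent branch choice and an auxiliary extension of the base field, but neither appears in the final formula because of the root-of-unity orthogonality, which is precisely what guarantees that the symmetric Lagrange formulas over all Newton--Puiseux branches are purely combinatorial identities over $\mathbb{K}$. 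A secondary bookkeeping point is the tracking of boundary constants such as $e\phi_0$ and the sign $(-1)^{e-1}$ coming from $\prod_k\zeta^k$ in the logarithmic identity, which do not affect the substantive structure.
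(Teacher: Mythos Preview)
Your approach is essentially the same as the paper's: both introduce the auxiliary variable $q=x^{1/e}$, apply Theorem~\ref{ThGes} to the single series $g(q)=q\,R(g(q))^{1/e}$, identify the $e$ Newton--Puiseux branches as $g(\zeta^k q)$, and then average over $k$ using root-of-unity orthogonality to collapse $R^{N/e}$ to $R^n$ when $N=ne$. The paper additionally invokes Weierstrass preparation and the Newton--Puiseux theorem to justify that the $g(\zeta^k q)$ exhaust the branches, whereas you argue this more informally; conversely, you are more explicit about the base-field extension and about the stray constant $\log\prod_k\zeta^k=\log(-1)^{e-1}$ in the logarithmic identity, which the paper's proof passes over in silence.
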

\begin{proof}
Let $\tilde{H}(q^{\frac{1}{e}})$ be the unique Newton--Puiseux series satisfying $$\tilde{H}(q^{\frac{1}{e}})=q^{\frac{1}{e}}R^{\frac{1}{e}}\Big(\tilde{H}\big(q^{\frac{1}{e}} \big)\Big)$$ for a fixed $e$th root of $R$. Then the $e$ different  solutions of \eqref{eqGSo} can be expressed as $H_k(q) = \tilde{H}\big(e^{\frac{2\pi i k}{e}}q^{\frac{1}{e}}\big)$ for $k=1,\cdots, e$ because they solve the $e$ different roots of it:
\[H_k(q) = e^{\frac{2\pi i k}{e}}q^{\frac{1}{e}}R^{\frac{1}{e}}\Big(H_k\big(q \big)\Big).\]

I obtain \begin{align*} \sum_{k=1}^e\phi\big(H_k(q)\big)=&\sum_{k=1}^e\Big([t^0]\big\{\phi(t)\big\}+[t^{-1}]\big\{\frac{\phi'(t)}{e}\log\big(R(t)\big)\big\}\\
+&\sum_{n\neq 0}\frac{1}{n}[t^{n-1}]\big\{\phi'(t)R^{\frac{n}{e}}(t)\big\}q^{\frac{n}{e}}e^{\frac{2\pi i k n}{e}}\Big),\end{align*}
which gives the required result \eqref{eqHLG}. The second equation follows by a similar argument.
\end{proof}

The first version of the proof will only use the above results and some tricks for generating series.
\begin{proof}[Formal power series proof of Theorem \ref{thmCtC} inspired by esg \cite{MOurl}]
By using Corollary \ref{CorMG}, while working with power series over the field $\K(\!(p)\!)$, I can write
\begin{align*}
&\sum_{j>0}\sum_{n,m>0}j[t^{en+j}]\{R^n(t)\}[t^{em-j}]\{R^m(t)\}\frac{p^n}{n}\frac{q^m}{m}\\=& -\sum_{j>0}\frac{1}{j}\Big(\sum_{k_1=1}^eH_{k_1}^{-j}(p)\Big)\Big(\sum_{k_2=1}^eH^{j}_{k_2}(q)\Big) \\
-&\sum_{j>0}\sum_{-j\leq ne<0}[t^{en+j}]\{R^n(t)\}\frac{p^n}{n}\Big(\sum_{k=1}^eH_k^j(q)\Big)\\-&\sum_{j>0}\sum_{k=1}^e[t^{-1}]\big\{t^{-j-1}\log\Big(R(t)\Big)\big\}H_k^j(q) \\
=&\sum_{k_1,k_2}\log\Big(1-\frac{H_{k_1}(q)}{H_{k_2}(p)}\Big)+ \sum_{n>0}\sum_{j\geq ne}[t^{j-en}]\big\{R^{-n}(t)\big\}\Big(\sum_{k=1}^e H^j_k(q)\Big)\frac{p^{-n}}{n}\\-&\sum_{k=1}^e\log\Big(R(H_k(q))\Big)\\
=&\sum_{k_1,k_2}\log\Big(1-\frac{H_{k_1}(q)}{H_{k_2}(p)}\Big)
- \sum_{n>0}\sum_{k=1}^e\frac{1}{n}\frac{H_k^{en}(q)}{R^{n}\big(H_k(q)\big)p^n}\\-&\sum_{k=1}^e\log\Big(R(H_k(q))\Big)\\
=&\sum_{k_1,k_2}\log\Big(1-\frac{H_{k_1}(q)}{H_{k_2}(p)}\Big)
- e\textnormal{log}\Big(1-\frac{q}{p}\Big)\\-&\sum_{k=1}^e\log\Big(R(H_k(q))\Big),
\end{align*}
where in the last step, I used \eqref{eqGSo} applied to the variable $q$ with coefficients in the field of Laurent series in $p$. Therefore, I obtain after taking exponential and taking the limit $p\to q$:
\begin{align*}
&\lim_{p\to q}\prod_{k_1\neq k_2}\big(H_{k_1}(p)-H_{k_2}(q)\big)\prod_{k=1}^e\frac{H_k(p)-H_k(q)}{p-q}\frac{1}{R\big(H_k(q)\big)}\prod_{k=1}^e\Big(\frac{p}{H_k(p)}\Big)^e\\
=&\prod_{k_1\neq k_2}\big(H_{k_1}(q)-H_{k_2}(q)\big)q^e\prod_{k=1}^e\Big(\frac{dq}{dH_k(q)}\Big)^{-1}\prod_{k=1}^e\frac{1}{H^e_k(q)R\big(H_k(q)\big)}\\
=&\prod_{k_1\neq k_2}\big(H_{k_1}(q)-H_{k_2}(q)\big)\prod_{k=1}^e\frac{1}{H^e_k(q)R\big(H_k(q)\big)}\Big(\frac{e}{H_k}-\frac{R'\big(H_k(q)\big)}{R\big(H_k(q)\big)}\Big)^{-1}.
\end{align*}
After taking the reciprocal of the final expression, I obtain the right-hand side of \eqref{eqExp} if $R(0) = 1$. I thus only need to replace $R(t)$ by $R(t)/R(0)$  to conclude the more general result.
\end{proof}
An alternative way of showing this result is by starting with
 \begin{align*}
 \label{eq:firststep}
 -&\sum_{\begin{subarray}a n,m>0\\ j>0\end{subarray}}j\frac{1}{m}[z^{m+j}]\Big\{R^m(z)\Big\}\frac{1}{n}[z^{n-j}]\Big\{R^n(z)\Big\}q^{n+m}\\ = &\sum_{j>0}\frac{1}{j}\Big(\sum_{n>0}[q^n]\Big\{\sum_{k_1=1}^eH_{k_1}^{-j}(q)\Big\}\Big)\Big(\sum_{k_2=1}^e H^{j}_{k_2}(q)\Big)
 \numberthis
 \end{align*} which used Corollary \ref{CorMG}. I then use contour integration combined with residue computation to write the last expression as the logarithm of \eqref{eqExp}.
\begin{proof}[Analytic proof inspired by Alex Gavrilov \cite{MOurl}]
Set the notation 
\[H^j(q) := \sum_{k=1}^eH^j_k(q),\]
and assume without loss of generality that $H^j(q)$ are all analytic. When $\phi(q) =\sum_{n\in \Z}\phi_n q^n$ is a Laurent series, I will write
\[
[q^{>0}]\big\{ \phi(q)\big\} := \sum_{n>0}\phi_nq^{n}.
\]
Then the following identity holds:
\begin{align*}
[q^{>0}]\big\{H^{-j}(q)\big\}&:=\sum_{n>0}[z^n]\big\{H^{-j}(z)\big\}q^n = \frac{1}{2\pi i} \oint_{|z|=\rho}\sum_{n>0}\Big(\frac{q}{z}\Big)^n\frac{1}{z}H^{-j}(z)dz\\
 &= \frac{1}{2\pi i} \oint_{|z|=\rho}\Big(\frac{1}{z-q}-\frac{1}{z}\Big)H^{-j}(z)dz,
\end{align*}
which uses that 
\[
\frac{1}{2\pi i}\oint_{|z|=\rho} z^{j}dz = \begin{cases}
1&\textnormal{if}\quad j={-1},\\
0&\textnormal{otherwise}.
\end{cases}
\]
Plugging this into \eqref{eq:firststep} leads to
\begin{align*}
&\sum_{\begin{subarray}a j>0\end{subarray}}\frac{1}{j}H^j(q)[q^{>0}]\big\{H^{-j}(q)\big\}\\=&\frac{1}{2\pi i}\oint_{|z|=\rho}\Big(\frac{1}{z-q}-\frac{1}{z}\Big)\sum_{k_1,k_2}\log\Big(\frac{H_{k_1}(z)}{H_{k_1}(z)-H_{k_2}(q)}\Big)dz,
\end{align*}
where I view the factor consisting of the logarithm as formal until I sum over $k_1,k_2$ when it becomes a function in $q$  instead of its roots. Additionally, I always expand the logarithm as if $|H_{k_1}(z)| > |H_{k_2}(q)|$ for any $z,q$ such that $|q|<|z|=\rho$. This is the case when $\rho$ is sufficiently small, because $H_k(0) = 0$ and the leading coefficient of $H_k(q)$ is $q^{\frac{1}{e}}$. The resulting integral is well-defined but it can not be computed using residues because the integrand is not a meromorphic function in $B_\rho(0)$ - the disc of radius $\rho$.
Note however that
\begin{align*}
&\frac{1}{2\pi i}\oint_{|z|=\rho}\Big(\frac{1}{z-q}-\frac{1}{z}\Big)\log\Big(\frac{z}{z-q}\Big)dz\\
&= -\int_{0}^{1}\Big(\frac{\frac{r}{\rho}e^{2\pi i(\theta-\tau)}}{1-\frac{r}{\rho}e^{2\pi i(\theta-\tau)}}\Big)\log\Big(1-\frac{r}{\rho}e^{2\pi i(\theta-\tau)}\Big)d\tau=0,
\end{align*}
where I used the substitution $z=\rho e^{2\pi i \tau}, q = r e^{2\pi i\theta}$ and the integral vanishes because it is proportional to the integral of a total derivative of a $2\pi$ periodic function. 

Therefore, I may work with
\begin{align*}
\label{eqInt}
\frac{1}{2\pi i}\int _{|z|=\rho}&D(z,q)dz\\
=&\frac{1}{2\pi i}\oint_{|z|=\rho}\Big(\frac{1}{z-q}-\frac{1}{z}\Big)\sum_{k_1,k_2}\log\Big(\frac{H_{k_1}(z)\big(z-q\big)^{\frac{1}{e}}}{z^{\frac{1}{e}}\big(H_{k_1}(z)-H_{k_2}(q)\big)}\Big)dz
\numberthis
\end{align*}
 instead. Here, the powers $(-)^{\frac{1}{e}}$ disappear after taking the sum and are only meant for notational purposes. The integrand $D(z,q)$ is a meromorphic function in $z$ with poles of order 1 at $z=0$ and $z=q$. The absence of further singularities is a consequence of choosing $\rho$ sufficiently small such that $\prod_{i}H_i(z)$ is univalent in $B_{\rho}(0)$, and so is $\prod_{i,j}\big(H_i(z)-H_j(q)\big)$ for any $q\in B_{\rho}(0)$. The precise reason for choosing the extra factor is the expression
 \[
\prod_{k_1,k_2}\frac{H_{k_1}(z)}{H_{k_1}(z)-H_{k_2}(q)} = \Big(\frac{z}{z-q}\Big)^e\prod_{k_1,k_2}\frac{1+\mO(z^{\frac{1}{e}})}{1+\mO(q^{\frac{1}{e}})}
\]
which can be also used to conclude that the sum in \eqref{eqInt} has no singularities for small enough $q,z$.
After reorganizing the logarithms slightly, the integral \eqref{eqInt} can be expressed as the sum of the following two residues:
\begin{align*}
    \textnormal{Res}_{z=0}\big(D(z,q)\big) &= \lim_{z\to 0}\Big\{\Big(\frac{z}{z-q}-1\Big)\Big[\sum_{k=1}^e\log\Big(\frac{H_k(z)(z-q)^{\frac{1}{e}}}{z^{\frac{1}{e}}\big(H_k(z)-H_k(q)\big)}\Big)\\
    &+\sum_{k_1\neq k_2}\log\Big(\frac{H_{k_1}(z)(z-q)^{\frac{1}{e}}}{z^{\frac{1}{e}}\big(H_{k_1}(z)-H_{k_2}(q)\big)}\Big)\Big]\Big\}\\
    &=\sum_{k=1}^e\log\Big(\frac{H_k^e(q)}{q}\Big)\\
     \textnormal{Res}_{z=q}\big(D(z,q)\big) &= \lim_{z\to q}\Big\{\Big(1-\frac{z-q}{z}\Big)\Big[\sum_{k=1}^e\log\Big(\frac{H_k(z)(z-q)}{z(H_k(z)-H_k(q))}\Big)\\
     &+\sum_{k_1\neq k_2}\log\Big(\frac{H_{k_1}(z)}{H_{k_1}(z)-H_{k_2}(q)}\Big)\Big]\Big\}\\
     &=\sum_{k=1}^e\log\Big[\frac{H_k(q)}{q}\frac{dq}{d H_k(q)}\Big] + \sum_{k_1\neq k_2}\log\Big[\frac{H_{k_1}(q)}{\big(H_{k_1}(q)-H_{k_2}(q)\big)}\Big]
\end{align*}
After exponentiating, this implies \eqref{eqExp}. Finally, note that as the equation is polynomial in coefficients in each degree, the statement holds even after removing the analyticity condition on $H^j(q)$.
\end{proof}
In conclusion, I was able to prove in two different ways the equality of two formal power series: the series in \eqref{eqGeR} which appeared naturally during a computation in my work \cite{bojko3}, and \eqref{eqExp} obtained by Arbesfeld--Johnson--Lim--Oprea--Pandharipande \cite{AJLOP}. Both of them are used to express the generating series of enumerative invariants \eqref{Eq:ZESexp} related to Quot schemes. This establishes the method used in \cite{bojko3} as an autonomous approach to study the structure of \eqref{Eq:ZESexp} and address open questions related to it.


\begin{ack}
I would like to thank A. Gavrilov, A. Mellit and S. Yurkevich. I am also grateful to Academia Sinica where I stayed while finishing work on this submission. 
\end{ack}

\begin{funding}
I was supported by ERC-2017-AdG-786580-MACI. This project has received funding from the European Research Council (ERC) under the European Union Horizon 2020 research and innovation program (grant agreement No 786580). 
\end{funding}


\bibliographystyle{emss.bst}
\bibliography{mybib.bib} 









\end{document}